\documentclass[11pt,reqno]{amsart}
\usepackage{tikz}
\textheight    23cm
\textwidth     15.cm
\addtolength{\textheight}{-0.75in}
\oddsidemargin   .4cm
\evensidemargin  .4cm
\parskip 6pt
\usepackage{subfig}
\usepackage{epstopdf}
\usepackage{epsfig}
\usepackage{math}
\graphicspath{{./figures/}}
\usepackage{tikz}
\usetikzlibrary{shapes,arrows}
\usepackage{adjustbox}
\usepackage{rotating}
\usepackage{graphicx}

\usepackage{booktabs, multirow} 
\usepackage{soul}

\tikzstyle{decision} = [diamond, draw, fill=blue!20,
    text width=4.5em, text badly centered, node distance=3cm, inner sep=0pt]
\tikzstyle{block} = [rectangle, draw, fill=blue!20,
    text width=5em, text centered, rounded corners, minimum height=4em]
\tikzstyle{line} = [draw, -latex']
\tikzstyle{cloud} = [draw, ellipse,fill=red!20, node distance=3cm,
    minimum height=2em]

\usetikzlibrary{positioning}
\tikzset{main node/.style={circle,fill=blue!20,draw,minimum size=1cm,inner sep=0pt},  }

\begin{document}
\title[]{A kernel formula for regularized Wasserstein proximal operators}
\author[Li]{Wuchen Li}
\email{wuchen@mailbox.sc.edu}
\address{Department of Mathematics, University of South Carolina, Columbia }
\author[Liu]{Siting Liu}
\email{siting6@math.ucla.edu}
\address{Department of Mathematics, University of California, Los Angeles}
\author[Osher]{Stanley Osher}
\email{sjo@math.ucla.edu}
\address{Department of Mathematics, University of California, Los Angeles}
\newcommand{\vr}{\overrightarrow}
\newcommand{\wt}{\widetilde}
\newcommand{\dd}{\mathcal{\dagger}}
\newcommand{\ts}{\mathsf{T}}
\keywords{Wasserstein proximal operators; Hopf-Cole type transformations; Schr{\"o}dinger bridge systems; Heat kernels.}
\thanks{ W. Li is supported by  AFOSR MURI FA9550-18-1-0502,  AFOSR YIP award 2023, and NSF RTG: 2038080. S. Liu and S. Osher thank the funding from AFOSR MURI FA9550-18-1-0502 and ONR grants: N00014-18-1-2527, N00014-20-1-2093, and N00014-20-1-2787.}
\maketitle
\begin{abstract}
We study a class of regularized proximal operators in Wasserstein-2 space. We derive their solutions by kernel integration formulas. We obtain the Wasserstein proximal operator using a pair of forward-backward partial differential equations consisting of a continuity equation and a Hamilton-Jacobi equation with a terminal time potential function and an initial time density function. 
Following \cite{HFO, OHF}, we regularize the PDE pair by adding forward and backward Laplacian operators. We apply Hopf-Cole type transformations to rewrite these regularized PDE pairs into forward-backward heat equations. We then use the fundamental solution of the heat equation to represent the regularized Wasserstein proximal with kernel integral formulas. Numerical examples show the effectiveness of kernel formulas in approximating the Wasserstein proximal operator.  
\end{abstract}

\section{Introduction}
Proximal operators are essential tools in optimization. Recently, proximal operators in probability space equipped with the Wasserstein-2 metric have shown to be useful in scientific computing \cite{LLW} and machine learning \cite{B, WGAN}. The proximal operator in Wasserstein-2 space is called the Wasserstein proximal operator \cite{AGS}. An interesting example is the Wasserstein proximal operator of the Kullback--Leibler (KL) divergence, known as the JKO (Jordan-Kinderlehrer-Otto) scheme \cite{JKO}. This has been used to approximate the Wasserstein gradient flow as a backward Euler scheme in time discretizations.  

However, computing the Wasserstein proximal operator requires an optimization procedure. One has to develop an optimization step to compute or approximate Wasserstein metrics and energy functionals. 
This paper proposes an alternative approach to approximate the Wasserstein proximal operator. We use an optimal control formulation of the Wasserstein proximal operator, whose minimizer forms a pair of forward-backward partial differential equations, including continuity and Hamilton-Jacobi equations. We add forward-backward Laplacian operators into the PDE system as a regularization. Applying Hopf-Cole type transformations enables us to rewrite the PDE system into forward-backward heat equations. Using the fundamental solution of two heat equations, we write the regularized Wasserstein proximal of linear energy explicitly with a simple kernel formula. We apply the same idea to the regularized Wasserstein proximal of nonlinear energy. This turns into a nonlinear integral equation, which can be solved by simple fixed-point iteration methods. 

We now present the approximation of Wasserstein proximal of linear energy below. Given a probability density function $\rho_0$ with the finite second moment, a bounded potential function $V\in C^{1}(\mathbb{R}^d)$, and a scalar constant $T>0$, consider the Wasserstein proximal operator of the linear energy below: 
\begin{equation*}
\rho_T:=\mathrm{WProx}_{T\mathcal{V}}(\rho_0):=\arg\min_{q\in \mathcal{P}_2(\mathbb{R}^d)} \quad \int_{\mathbb{R}^d} V(x)q(x)dx+\frac{\mathcal{W}(\rho_0, q)^2}{2T}. 
\end{equation*}
 where $\mathcal{W}(\rho_0, q)$ is the Wasserstein-2 distance between $\rho_0$ and $q$, and the minimization is taken among all probability density functions $q$ with the finite second moment. We denote the minimizer $\rho_T=q$ as the Wasserstein proximal operator. According to the Benamou-Brenier formula \cite{BB} and simple derivations in section \ref{sec2}, the Wasserstein proximal operator forms an optimal control problem, whose minimizer satisfies a forward-backward PDE system:
  \begin{equation}\label{PDE_BB}
\left\{\begin{aligned}
&\partial_t\rho(t,x)+\nabla\cdot\big(\rho(t,x)\nabla_x\Phi(t,x)\big)=0,\\
&\partial_t\Phi(t,x)+\frac{1}{2}\|\nabla_x\Phi(t,x)\|^2=0,\\
& \rho(0,x)=\rho_0(x),\quad \Phi(T,x)=-V(x). 
\end{aligned}\right.
\end{equation}
Here $t\in [0, T]$, and $\rho_T$ satisfies the Wasserstein proximal operator of linear energy. 

In this paper,  {motivated by regularized proximal operators in Euclidean space \cite{HFO, OHF},} we study a regularized PDE system of equation \eqref{PDE_BB}. For a constant scalar $\beta>0$,  we have
\begin{equation}\label{PDE_BB_1}
\left\{\begin{aligned}
&\partial_t\rho(t,x)+\nabla\cdot\big(\rho(t,x)\nabla_x\Phi(t,x)\big)=\beta \Delta_x \rho(t,x),\\
&\partial_t\Phi(t,x)+\frac{1}{2}\|\nabla_x\Phi(t,x)\|^2=-\beta \Delta_x\Phi(t,x),\\
& \rho(0,x)=\rho_0(x),\quad \Phi(T,x)=-V(x). 
\end{aligned}\right.
\end{equation}
By applying Hopf-Cole type transformations, we demonstrate that equation \eqref{PDE_BB_1} has the solution below:
\begin{equation*}
\left\{\begin{aligned}
&\rho(t,x)=(G_{T-t}*e^{-\frac{V}{2\beta}})(x)\cdot (G_t*\frac{\rho_0}{G_T*e^{-\frac{V}{2\beta}}})(x),\\
&\Phi(t,x)=2\beta \log (G_{T-t}*e^{-\frac{V}{2\beta}})(x). 
\end{aligned}\right.
\end{equation*}
where $*$ is a convolution operator and $G_t$ is the scaled heat kernel denoted as
\begin{equation*}
G_t(x,y):=\frac{1}{\sqrt{(4\pi \beta t)^d}}e^{-\frac{\|x-y\|^2}{4\beta t}}.
\end{equation*}
In particular, we obtain an integral representation for the terminal density function:
\begin{equation}\label{kernel_formula}
\rho(T,x)=\int_{\mathbb{R}^d}K(x,y)\rho(0,y)dy,
\end{equation}
where $K\colon \mathbb{R}^d\times\mathbb{R}^d\rightarrow\mathbb{R}$ is a kernel function 
\begin{equation*}
K(x,y):=\frac{e^{-\frac{1}{2\beta}(V(x)+\frac{\|x-y\|^2}{2T})}}{\int_{\mathbb{R}^d}e^{-\frac{1}{2\beta}(V(z)+\frac{\|z-y\|^2}{2T})}dz}.
\end{equation*}
For a fixed variable $y\in\mathbb{R}^d$, the kernel function $K(x,y)$ is a Gibbs distribution or softmax function of variable $x$ with constant $\frac{1}{2\beta}$ and function $V(x)+\frac{1}{2T}\|x-y\|^2$. 
Formula \eqref{kernel_formula} is a kernel integration representation of the regularized Wasserstein proximal operator. It could be useful in approximating or computing high-dimensional sampling problems; see previous studies in \cite{HFO, OHF}. 

Wasserstein proximal operators have been widely studied \cite{AGS}. The regularized Wasserstein proximal dynamics \eqref{PDE_BB_1} and their Hopf-Cole type transformations \cite{Nelson2} have been widely studied in Schr{\"o}dinger bridge systems \cite{CGP,Yasue1981_stochastica,Zambrini1986} and Schr{\"o}dinger equations \cite{BGU,Nelson2}; see also generalized Hopf-Cole type transformations \cite{LeL}. 
In particular, there are Schr{\"o}dinger-Follermer diffusions \cite{fol88}, which are closely related to the equation system \eqref{PDE_BB_1}. We also remark that the optimal control formulation of the Wasserstein-proximal operator is a variational problem in potential mean-field games; see \cite{LL}. It has been used in studying and computing particular classes of mean-field control and mean-field games \cite{OG, OG1}. Compared to all the above studies, we study Schr{\"o}dinger bridge systems with different boundary conditions. It contains a given initial time density function and a terminal time potential function, different from fixed initial and terminal time density functions as in Schr{\"o}dinger bridge systems. This allows us to obtain a closed-form kernel integration update, at least for regularized Wasserstein proximal operators of linear energies. For nonlinear potential energies, our kernel formula also provides an iterative update to compute Wasserstein proximal operators. 

The paper is organized as follows. We introduce Wasserstein proximal operators of linear energies, their diffusion regularizations, and minimization systems in section \ref{sec2}. In section \ref{sec3}, we derive the kernel representation of regularized Wasserstein proximal of linear energies. Finally, we generalize the kernel formulation for Wasserstein proximal operators of nonlinear energies in section \ref{sec4}. Numerical experiments of proposed kernel functions are presented in section \ref{section5}. 
\section{Regularized Wasserstein proximal operators of linear energies}\label{sec2} 
In this section, we study a class of mean-field control problems that lead to regularized proximal operators of linear energies in Wasserstein-2 space. We derive the minimizing equations, which is a forward-backward PDE system.  
\subsection{Wasserstein proximal operators of linear energies}
Consider a linear energy functional 
\begin{equation*}
\mathcal{V}(\rho)=\int_{\mathbb{R}^d} V(x)\rho(x)dx,
\end{equation*}
where $V\in C^1(\mathbb{R}^d)$ is a given function. Denote a terminal time $T>0$ and $\rho_0\in \mathcal{P}_2(\mathbb{R}^d)$, where $\mathcal{P}_2(\mathbb{R}^d)$ is the probability density set with finite second moment. Consider the proximal operator of a linear energy in Wasserstein-2 space.
\begin{equation}\label{Wprox}
\rho_T:=\mathrm{WProx}_{T\mathcal{V}}(\rho_0):=\arg\min_{q\in \mathcal{P}_2(\mathbb{R}^d)} \quad \mathcal{V}(q)+\frac{\mathcal{W}(\rho_0, q)^2}{2T}. 
\end{equation}
Here $\mathcal{W}$ denotes the Wasserstein-2 distance between two densities $\rho^0$ and $\rho$; see \cite{AGS, Villani2009_optimal}. Recall that the Wasserstein-2 distance can be recast as an optimal control problem, known as Benamou-Brenier formula \cite{BB}. Consider
\begin{equation*}
\frac{\mathcal{W}(\rho_0, q)^2}{2T}:=\inf_{\rho, v, \rho_T} \quad \int_0^T\int_{\mathbb{R}^d}\frac{1}{2}\|v(t,x)\|^2\rho(t,x)dxdt,
\end{equation*}
where the minimizer is taken among all vector fields $v\colon [0, T]\times\mathbb{R}^d\rightarrow\mathbb{R}^d$, density functions $\rho\colon [0, T)\times \mathbb{R}^d\rightarrow\mathbb{R}$, such that 
\begin{equation*}
\partial_t\rho(t,x)+\nabla\cdot(\rho(t,x)v(t,x))=0,\quad \rho(0,x)=\rho_0(x), \quad \rho(T,x)=q(x).
\end{equation*}
Wasserstein proximal operators are useful in designing sampling algorithms. However, the Wasserstein proximal operator \eqref{Wprox} does not have a closed-form update. One needs to apply numerical methods to compute the variational problem \eqref{Wprox}. 
\subsection{Regularized Wasserstein proximal operators of linear energies}
In this subsection, we consider a regularized Wasserstein proximal operator, which adds a diffusive term to the continuity equation of the Benamou-Brenier formula. This regularization forms an interesting mean-field control problem. We then derive the related minimizing system with initial-terminal conditions. In the next section, we demonstrate that this minimizing system has a closed-form kernel representation.

\begin{definition}\label{def1}
Let $\beta>0$ be a scalar, and $\|\cdot\|$ is the Euclidean norm. Consider the following mean-field control problem:
\begin{equation}\label{variation}
\inf_{\rho, v, q} \quad \int_0^T\int_{\mathbb{R}^d}\frac{1}{2}\|v(t,x)\|^2\rho(t,x)dxdt+\int_{\mathbb{R}^d}V(x)q(x)dx, 
\end{equation}
where the minimizer is taken among all drift vector fields $v\colon [0, T]\times\mathbb{R}^d\rightarrow\mathbb{R}^d$, density functions $\rho\colon [0, T)\times \mathbb{R}^d\rightarrow\mathbb{R}$ and terminal time density function $q\colon \mathbb{R}^d\rightarrow\mathbb{R}$, such that 
\begin{equation*}
\partial_t\rho(t,x)+\nabla\cdot(\rho(t,x)v(t,x))=\beta \Delta \rho(t,x),\quad \rho(0,x)=\rho_0(x), \quad \rho(T,x)=q(x).
\end{equation*}

We denote the regularized proximal operator as
\begin{equation*}
\rho_T:=\mathrm{WProx}_{T\mathcal{V},\beta}(\rho_0), 
\end{equation*}
where $\rho_T$ is the solution for the density function in the variational problem \eqref{variation} at time $t=T$. 
\end{definition}
\begin{remark}
We note that when $\beta=0$, variational problem \eqref{variation} is the minimization problem \eqref{Wprox}. We add a diffusion into the continuity equation in the variational problem \eqref{variation}. This forms the regularized Wasserstein proximal operator. 
\end{remark}
We next derive the minimization system for mean-field control problem \eqref{variation}. 
\begin{proposition}[Regularized Wasserstein proximal dynamics]
There exists a Lagrange multiplier function $\Phi\colon [0, T]\times \mathbb{R}^d\rightarrow\mathbb{R}$, such that $(\rho, \Phi)$ satisfies the following forward-backward PDE system. 
\begin{equation}\label{PDE_pair}
\left\{\begin{aligned}
&\partial_t\rho(t,x)+\nabla\cdot\big(\rho(t,x)\nabla_x\Phi(t,x)\big)=\beta \Delta_x \rho(t,x),\\
&\partial_t\Phi(t,x)+\frac{1}{2}\|\nabla_x\Phi(t,x)\|^2=-\beta \Delta_x\Phi(t,x),\\
& \rho(0,x)=\rho_0(x),\quad \Phi(T,x)=-V(x). 
\end{aligned}\right.
\end{equation}
Here the time variable satisfies $t \in [0, T]$, $\rho(t,x)$ is the density function, and $\Phi(t,x)$ is the potential function. Equation system \eqref{PDE_pair} consists of a forward-time Fokker-Planck equation and a backward-time viscous Hamilton-Jacobi equation (Burgers' equation). And the terminal time density function is the regularized Wasserstein proximal density:
\begin{equation*}
  \rho(T,x)=q(x)=\mathrm{WProx}_{T\mathcal{V},\beta}(\rho_0).   
\end{equation*}
\end{proposition}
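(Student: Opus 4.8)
The plan is to treat the mean-field control problem \eqref{variation} as a constrained optimization problem and extract its first-order (KKT) conditions by introducing a Lagrange multiplier for the Fokker--Planck constraint. First I would let $\Phi\colon[0,T]\times\mathbb{R}^d\to\mathbb{R}$ be the multiplier attached to $\partial_t\rho+\nabla\cdot(\rho v)-\beta\Delta\rho=0$ and form
\[
\mathcal{L}(\rho,v,q,\Phi)=\int_0^T\!\!\int_{\mathbb{R}^d}\tfrac12\|v\|^2\rho\,dx\,dt+\int_{\mathbb{R}^d}Vq\,dx+\int_0^T\!\!\int_{\mathbb{R}^d}\Phi\big(\partial_t\rho+\nabla\cdot(\rho v)-\beta\Delta\rho\big)\,dx\,dt.
\]
Integrating by parts in $t$ (which produces the boundary terms $\int\Phi(T,\cdot)q-\int\Phi(0,\cdot)\rho_0$) and in $x$ (moving $\nabla$ off $\rho v$ and the Laplacian off $\rho$, using decay at infinity) rewrites this as
\[
\mathcal{L}=\int_0^T\!\!\int_{\mathbb{R}^d}\rho\Big(\tfrac12\|v\|^2-\partial_t\Phi-v\cdot\nabla_x\Phi-\beta\Delta_x\Phi\Big)dx\,dt+\int_{\mathbb{R}^d}\big(V+\Phi(T,\cdot)\big)q\,dx-\int_{\mathbb{R}^d}\Phi(0,\cdot)\rho_0\,dx.
\]

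Next I would take variations. Stationarity in $v$ gives $\rho\,(v-\nabla_x\Phi)=0$, hence $v=\nabla_x\Phi$ on $\{\rho>0\}$. Substituting this and requiring stationarity in $\rho$ forces the integrand's $\rho$-coefficient to vanish, $\tfrac12\|\nabla_x\Phi\|^2-\partial_t\Phi-\|\nabla_x\Phi\|^2-\beta\Delta_x\Phi=0$, i.e. the backward viscous Hamilton--Jacobi (Burgers) equation $\partial_t\Phi+\tfrac12\|\nabla_x\Phi\|^2=-\beta\Delta_x\Phi$. Stationarity in the free terminal density $q$ gives $V(x)+\Phi(T,x)=0$, that is $\Phi(T,x)=-V(x)$, while stationarity in $\Phi$ recovers the constraint with $v=\nabla_x\Phi$, namely the forward Fokker--Planck equation $\partial_t\rho+\nabla\cdot(\rho\nabla_x\Phi)=\beta\Delta_x\rho$ with $\rho(0,\cdot)=\rho_0$ (the $t=0$ boundary term contributes nothing since $\rho(0,\cdot)$ is prescribed). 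Collecting these is exactly \eqref{PDE_pair}, and since the optimal $q$ equals $\rho(T,\cdot)$, which is by Definition~\ref{def1} the operator $\mathrm{WProx}_{T\mathcal{V},\beta}(\rho_0)$, the final assertion follows.

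Two points deserve care, and I expect the functional-analytic one to be the main obstacle. First, to upgrade "critical point" to "minimizer" one should observe that the problem is convex after the change of unknowns $m=\rho v$: the kinetic energy $\int\frac{\|m\|^2}{2\rho}$ is jointly convex in $(\rho,m)$, the constraint $\partial_t\rho+\nabla\cdot m-\beta\Delta\rho=0$ is affine, and $\int Vq$ is linear in $q$, so the Euler--Lagrange system is necessary and sufficient. Second, the formal computation presupposes enough regularity and spatial decay of $\rho$ and $\Phi$ for the integrations by parts, the existence of a minimizer with strictly positive $\rho$ (so that $v=\nabla_x\Phi$ holds everywhere rather than only on $\{\rho>0\}$), and the existence of the multiplier $\Phi$. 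A fully rigorous argument would lean on the smoothing effect of the $\beta\Delta$ terms and, ultimately, on the explicit Hopf--Cole representation constructed in the next section, which exhibits a smooth positive solution of \eqref{PDE_pair}; at the level of this proposition, however, the Lagrangian derivation above is the intended proof.
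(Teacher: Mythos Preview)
Your proposal is correct and follows essentially the same Lagrangian/saddle-point derivation as the paper: introduce the multiplier $\Phi$, integrate by parts in $t$ and $x$, and read off the four first-order conditions in $v$, $\rho$, $q$, and $\Phi$ to obtain \eqref{PDE_pair}. The only difference is that you substitute $v=\nabla_x\Phi$ before varying in $\rho$ whereas the paper varies first and substitutes afterward (both yield the same equation), and you add a discussion of convexity via $m=\rho v$ and of regularity/positivity that the paper omits entirely; the paper's proof is purely formal on these points.
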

\begin{proof}
Denote a Lagrange multiplier function $\Phi$. Consider the following saddle point problem 
\begin{equation*}
\inf_{\substack{\rho, v, \rho_T}}\sup_\Phi~\mathcal{L}(\rho,v,\rho_T, \Phi),
\end{equation*}
where
\begin{equation*}
\begin{split}
\mathcal{L}(\rho, v, \rho_T, \Phi):=&\int_0^T \int_{\mathbb{R}^d}\frac{1}{2} \|v(t,x)\|^2\rho(t,x) dx dt+\int_{\mathbb{R}^d}V(x)\rho(T,x)dx\\
&+\int_0^1\int_{\mathbb{R}^d} \Phi(t,x) \Big(\partial_t\rho(t,x)+\nabla\cdot(\rho(t,x) v(t,x))-\beta\Delta\rho(t,x)\Big)dxdt\\
=&\int_0^T \int_{\mathbb{R}^d}\frac{1}{2} \|v(t,x)\|^2\rho(t,x) dx dt+\int_{\mathbb{R}^d}V(x)\rho(T,x)dx\\
&+\int_{\mathbb{R}^d}\Phi(T,x)\rho(T,x)dx-\int_{\mathbb{R}^d}\Phi(0,x)\rho(0,x)dx\\
&+\int_0^1\int_{\mathbb{R}^d} \Big(-\partial_t\Phi(t,x)\rho(t,x)-(\nabla\Phi(t,x), v(t,x)) \rho(t,x)-\beta\Phi(t,x)\Delta\rho(t,x)\Big)dxdt.
\end{split}
\end{equation*}
 The saddle point system satisfies
 \begin{equation*}
 \left\{\begin{aligned}
&\frac{\delta}{\delta v}\mathcal{L}=0,\\
&\frac{\delta}{\delta \Phi}\mathcal{L}=0,\\
&\frac{\delta}{\delta \rho}\mathcal{L}=0,\\
&\frac{\delta}{\delta \rho_T}\mathcal{L}=0.
\end{aligned}\right.
\quad \Rightarrow \quad \left\{\begin{aligned}
&\rho(v-\nabla\Phi)=0,\\
&\partial_t\rho+\nabla\cdot(\rho v)-\beta \Delta\rho=0,\\
&\frac{\|v\|^2}{2}-\partial_t\Phi-\nabla\Phi\cdot v-\beta \Delta\Phi=0\\
&\Phi(T,x)+V(x)=0.
\end{aligned}\right.
 \end{equation*} 
Substituting $v=\nabla\Phi$ into the third equality, we derive two PDEs \eqref{PDE_pair}. Thus, we denote $q(x)=\rho(T,x)$. 
\end{proof}
\begin{remark}
We repeat that $\rho_T=\mathrm{WProx}_{T\mathcal{V},\beta}(\rho_0)$, where $\rho_T(x)=\rho(T,x)$ represents the density function at the terminal time $t=T$ in PDE system \eqref{PDE_pair}. 
\end{remark}
\section{Kernel formulation}\label{sec3}
In this section, we derive $\rho_T$ with a kernel function \eqref{kernel}. We solve a pair of forward-backward PDE system involving a viscous Hamilton-Jacobi equation with a fixed terminal time boundary condition.  By using a Hopf-Cole type transformation, we rewrite the PDEs into forward-backward heat equations with nonlinear initial-terminal time conditions. Thus we solve the original PDE system \eqref{PDE_pair}. 
\begin{proposition}[Hopf-Cole type transformations]
Define a pair of variables $(\eta, \hat \eta)$ as
\begin{equation}\label{CH}
\eta(t,x)=e^{\frac{\Phi(t,x)}{2\beta}},\quad \hat\eta(t,x)=\rho(t,x)e^{-\frac{\Phi(t,x)}{2\beta}}.
\end{equation}
I.e., 
\begin{equation*}
\Phi(t,x)=2\beta\log\eta(t,x),\quad \rho(t,x)=\eta(t,x)\hat\eta(t,x).
\end{equation*}
Then equation \eqref{PDE_pair} in term of $(\eta, \hat \eta)$ satisfies forward-backward heat equations with a nonlinear initial-terminal time boundary condition: 
\begin{equation}\label{heat_system}
\left\{\begin{aligned}
&\partial_t\hat\eta(t,x)=\beta \Delta_x\hat\eta(t,x),\\
&\partial_t\eta(t,x)=-\beta \Delta_x\eta(t,x),\\
& \eta(0,x) \hat\eta(0,x)=\rho_0(x),\quad \eta(T,x)=e^{\frac{\Phi(T,x)}{2\beta}}=e^{-\frac{V(x)}{2\beta}}. 
\end{aligned}\right.
\end{equation}
\end{proposition}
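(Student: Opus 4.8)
The plan is to verify \eqref{heat_system} by direct substitution, using that the exponential change of variables \eqref{CH} is designed precisely to kill the quadratic nonlinearity $\frac12\|\nabla_x\Phi\|^2$ in the viscous Hamilton--Jacobi equation, after which the Fokker--Planck equation linearizes as well. Throughout I would use that $\eta=e^{\Phi/(2\beta)}>0$, so that all divisions below are legitimate and the map $(\rho,\Phi)\mapsto(\eta,\hat\eta)$ is invertible via $\Phi=2\beta\log\eta$, $\rho=\eta\hat\eta$; this invertibility is what makes the two PDE systems genuinely equivalent and is what will later let us read the solution of \eqref{PDE_pair} off the (explicitly solvable) heat system.

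First I would establish the backward heat equation for $\eta$. Differentiating $\eta=e^{\Phi/(2\beta)}$ in $t$ and in $x$ gives $\partial_t\eta=\tfrac{1}{2\beta}\eta\,\partial_t\Phi$, $\nabla_x\eta=\tfrac{1}{2\beta}\eta\nabla_x\Phi$, and hence $\Delta_x\eta=\tfrac{1}{2\beta}\eta\big(\tfrac{1}{2\beta}\|\nabla_x\Phi\|^2+\Delta_x\Phi\big)$. Substituting the second equation of \eqref{PDE_pair}, namely $\partial_t\Phi=-\tfrac12\|\nabla_x\Phi\|^2-\beta\Delta_x\Phi$, into the expression for $\partial_t\eta$, one checks that both $\partial_t\eta$ and $-\beta\Delta_x\eta$ equal $-\tfrac{\eta}{4\beta}\|\nabla_x\Phi\|^2-\tfrac{\eta}{2}\Delta_x\Phi$, so $\partial_t\eta=-\beta\Delta_x\eta$.

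Next I would treat $\hat\eta=\rho\,e^{-\Phi/(2\beta)}=\rho/\eta$. Rewriting the drift as $\nabla_x\Phi=2\beta\,\nabla_x\eta/\eta$, so that $\rho\nabla_x\Phi=2\beta\,\hat\eta\nabla_x\eta$, and writing $\rho=\eta\hat\eta$, I substitute into the first equation of \eqref{PDE_pair} and expand every product by the Leibniz rule. The cross terms $2\beta\,\nabla_x\eta\cdot\nabla_x\hat\eta$ arising from $\nabla_x\cdot(\rho\nabla_x\Phi)$ and from $\beta\Delta_x(\eta\hat\eta)$ cancel exactly, leaving $\hat\eta\,\partial_t\eta+\eta\,\partial_t\hat\eta+\beta\hat\eta\Delta_x\eta-\beta\eta\Delta_x\hat\eta=0$. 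Using the identity $\partial_t\eta=-\beta\Delta_x\eta$ from the previous step cancels the first and third terms, leaving $\eta\big(\partial_t\hat\eta-\beta\Delta_x\hat\eta\big)=0$, hence $\partial_t\hat\eta=\beta\Delta_x\hat\eta$ since $\eta>0$. Finally the boundary data transform immediately: $\eta(0,x)\hat\eta(0,x)=e^{\Phi(0,x)/(2\beta)}\rho(0,x)e^{-\Phi(0,x)/(2\beta)}=\rho_0(x)$, and $\eta(T,x)=e^{\Phi(T,x)/(2\beta)}=e^{-V(x)/(2\beta)}$ by $\Phi(T,x)=-V(x)$.

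I do not expect a real obstacle here: the whole argument is an elementary chain-rule computation, and the only things needing care are the bookkeeping in the Leibniz expansions and invoking $\eta>0$ (equivalently, enough regularity and positivity of $\rho,\Phi$) to divide. The one genuinely load-bearing point is the cancellation in the third step, where the nonlinear and drift contributions of the Fokker--Planck equation must be absorbed entirely into the $\eta$-equation; this is exactly the effect the Hopf--Cole substitution \eqref{CH} is built to produce.
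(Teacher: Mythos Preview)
Your proposal is correct and follows essentially the same Hopf--Cole computation as the paper. The only variation worth noting is in the $\hat\eta$ step: the paper expands $\partial_t\hat\eta$ and $\Delta_x\hat\eta$ directly in terms of $(\rho,\Phi)$ and then invokes both equations of \eqref{PDE_pair} together, whereas you substitute $\rho=\eta\hat\eta$ into the Fokker--Planck equation and use the already-established backward heat equation $\partial_t\eta=-\beta\Delta_x\eta$ to cancel the residual terms. This is a mild organizational difference---your route reuses the $\eta$-equation and so needs only the Fokker--Planck equation at that stage, which makes the cancellation slightly more transparent---but the underlying mechanism is identical.
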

\begin{proof}
The proof follows from classical calculations in Hopf-Cole type transformations. 
For completeness of paper, we present the derivation of forwrad-backward heat equations \eqref{heat_system}. Denote $\eta(t,x)=e^{\frac{\Phi(t,x)}{2\beta}}$. Then
\begin{equation*}
\partial_t\eta=\partial_t(e^{\frac{\Phi(t,x)}{2\beta}})=\frac{1}{2\beta}e^{\frac{\Phi}{2\beta}}\partial_t\Phi.
\end{equation*}
And 
\begin{equation*}
\begin{split}
\Delta\eta=&\nabla\cdot(\nabla \eta)=\nabla\cdot(\frac{1}{2\beta}e^{\frac{\Phi}{2\beta}}\nabla \Phi)\\
=&\frac{1}{2\beta} (\nabla e^{\frac{\Phi}{2\beta}}, \nabla\Phi)+\frac{1}{2\beta}e^{\frac{\Phi}{2\beta}}\Delta\Phi\\
=&\frac{1}{(2\beta)^2} e^{\frac{\Phi}{2\beta}}\|\nabla\Phi\|^2+\frac{1}{2\beta}e^{\frac{\Phi}{2\beta}}\Delta\Phi\\
=&\frac{1}{2\beta^2} e^{\frac{\Phi}{2\beta}}\Big(\frac{1}{2}\|\nabla\Phi\|^2+\beta\Delta\Phi\Big).
\end{split}
\end{equation*}
From the viscous Hamilton-Jacobi equation \eqref{PDE_pair}, we have 
\begin{equation*}
\partial_t\eta+\beta \Delta\eta=0.
\end{equation*}
Denote $\hat\eta(t,x)=\rho(t,x)e^{-\frac{\Phi(t,x)}{2\beta}}$. Then 
\begin{equation*}
\begin{split}
\partial_t\hat\eta=& e^{-\frac{\Phi}{2\beta}}\Big\{\partial_t\rho-\rho \frac{1}{2\beta}\partial_t\Phi\Big\}.
\end{split}
\end{equation*}
And 
\begin{equation*}
\begin{split}
\nabla\hat\eta=&\nabla(\rho e^{-\frac{\Phi}{2\beta}})=\nabla\rho e^{-\frac{\Phi}{2\beta}}+\rho \nabla e^{-\frac{\Phi}{2\beta}}\\
=&\nabla\rho e^{-\frac{\Phi}{2\beta}}-\frac{1}{2\beta}\rho \nabla\Phi e^{-\frac{\Phi}{2\beta}}. 
\end{split}
\end{equation*}
Hence 
\begin{equation*}
\begin{split}
\Delta\hat\eta=&\nabla\cdot(\nabla\hat\eta)=\nabla\cdot(\nabla\rho e^{-\frac{\Phi}{2\beta}}-\frac{1}{2\beta}\rho \nabla\Phi e^{-\frac{\Phi}{2\beta}})\\
=&\Delta\rho e^{-\frac{\Phi}{2\beta}}+(\nabla\rho, \nabla e^{-\frac{\Phi}{2\beta}})-\frac{1}{2\beta}(\nabla\rho, \nabla\Phi)e^{-\frac{\Phi}{2\beta}}-\frac{1}{2\beta}\rho \Delta\Phi e^{-\frac{\Phi}{2\beta}}-\frac{1}{2\beta}\rho (\nabla\Phi, \nabla e^{-\frac{\Phi}{2\beta}})\\ 
=&\Delta\rho e^{-\frac{\Phi}{2\beta}}-\frac{1}{\beta}(\nabla\rho, \nabla \Phi) e^{-\frac{\Phi}{2\beta}}-\frac{1}{2\beta}\rho \Delta\Phi e^{-\frac{\Phi}{2\beta}}+\frac{1}{(2\beta)^2}\rho \|\nabla\Phi\|^2\\
=&e^{-\frac{\Phi}{2\beta}}\Big\{\Delta\rho-\frac{1}{\beta}\nabla\cdot(\rho \nabla\Phi)+\frac{1}{2\beta}\rho\Delta\Phi+\frac{1}{4\beta^2}\rho\|\nabla\Phi\|^2\Big\}\\
=&\frac{1}{\beta}e^{-\frac{\Phi}{2\beta}}\Big\{\partial_t\rho -\frac{1}{2\beta}\rho \partial_t\Phi\Big\},
\end{split}
\end{equation*}
where we use the fact $\nabla\cdot(\rho\nabla\Phi)=(\nabla\rho, \nabla\Phi)+\rho\Delta\Phi$. 
Hence we have 
\begin{equation*}
\partial_t\hat\eta-\beta\Delta\hat\eta=0.
\end{equation*}
\end{proof}
We next apply the heat kernel to represent the solution of PDE system \eqref{heat_system}. From equation~\eqref{CH}, we can derive solutions $\rho(t,x)$ and $\Phi(t,x)$ in terms of $\rho_0(x)$ and $V(x)$. 
\begin{proposition}[Kernel solutions]
Denote a scaled heat kernel function: 
\begin{equation*}
G_t(x,y):=\frac{1}{\sqrt{(4\pi \beta t)^d}}e^{-\frac{\|x-y\|^2}{4\beta t}},\quad t\in (0, T].
\end{equation*}
Assume that 
\begin{equation*}
\int_{\mathbb{R}^d}e^{-\frac{1}{2\beta}(V(z)+\frac{\|z-y\|^2}{2T})}dz<+\infty.
\end{equation*}
Then the following solutions hold.
\begin{equation}\label{cfu}
\left\{\begin{aligned}
&\rho(t,x)=(G_{T-t}*e^{-\frac{V}{2\beta}})(x)\cdot (G_t*\frac{\rho_0}{G_T*e^{-\frac{V}{2\beta}}})(x),\\
&\Phi(t,x)=2\beta \log (G_{T-t}*e^{-\frac{V}{2\beta}})(x). 
\end{aligned}\right.
\end{equation}
So
\begin{equation}\label{rho}
\rho(t,x)=\frac{1}{{(4\pi\beta\frac{t(T-t)}{T}})^{\frac{d}{2}}}\int_{\mathbb{R}^d}\int_{\mathbb{R}^d}\frac{e^{-\frac{1}{2\beta}(V(z)+\frac{\|x-z\|^2}{2(T-t)}+\frac{\|x-y\|^2}{2t})}}{\int_{\mathbb{R}^d}e^{-\frac{1}{2\beta}(V(\tilde y)+\frac{\|y-\tilde y\|^2}{2T})}d\tilde y}\rho(0,y) dydz,
\end{equation}
and 
\begin{equation}\label{Phi}
\Phi(t,x)=2\beta \log\Big(\int_{\mathbb{R}^d}\frac{1}{(4\pi\beta (T-t))^{\frac{d}{2}}}e^{-\frac{1}{2\beta}(V(y)+\frac{\|x-y\|^2}{2(T-t)})}dy\Big).
\end{equation}

\end{proposition}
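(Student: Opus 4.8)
The plan is to lean entirely on the preceding Hopf--Cole proposition, which shows that under the substitution \eqref{CH} the forward--backward system \eqref{PDE_pair} is equivalent to the \emph{decoupled} pair \eqref{heat_system}. Since $e^{-V/2\beta}>0$ and the Gaussian kernel $G_t$ is strictly positive, any $\eta$ produced below will stay strictly positive, so the change of variables $(\rho,\Phi)\leftrightarrow(\eta,\hat\eta)$ is invertible and it suffices to solve \eqref{heat_system}. The key point making a closed form possible is that the boundary conditions in \eqref{heat_system} are \emph{triangular}: the equation for $\eta$ is driven only by its own terminal datum, and only once $\eta$ is known does the mixed condition $\eta(0,x)\hat\eta(0,x)=\rho_0(x)$ determine $\hat\eta$.

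First I would solve for $\eta$. The equation $\partial_t\eta=-\beta\Delta_x\eta$ is a \emph{backward} heat equation with terminal datum $\eta(T,x)=e^{-V(x)/2\beta}$; the substitution $\tau=T-t$ turns it into the standard heat equation $\partial_\tau\tilde\eta=\beta\Delta\tilde\eta$ with initial datum $e^{-V/2\beta}$, whose solution is the heat-kernel convolution. This gives $\eta(t,x)=(G_{T-t}*e^{-V/2\beta})(x)$, and hence the second line of \eqref{cfu}, namely $\Phi(t,x)=2\beta\log(G_{T-t}*e^{-V/2\beta})(x)$, which becomes \eqref{Phi} once $G_{T-t}$ is written out. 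The standing assumption $\int_{\mathbb{R}^d}e^{-\frac{1}{2\beta}(V(z)+\|z-y\|^2/(2T))}dz<+\infty$ is exactly what makes $(G_T*e^{-V/2\beta})(y)$ finite for every $y$ (and then $(G_{T-t}*e^{-V/2\beta})(x)$ finite for all $t\in[0,T)$), so $\eta(0,\cdot)$ is finite and strictly positive.

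Next, with $\eta(0,x)=(G_T*e^{-V/2\beta})(x)$ known and positive, the constraint at $t=0$ forces $\hat\eta(0,x)=\rho_0(x)/(G_T*e^{-V/2\beta})(x)$, which I would feed as initial datum into the \emph{forward} heat equation $\partial_t\hat\eta=\beta\Delta_x\hat\eta$; solving by the heat kernel gives $\hat\eta(t,x)=\big(G_t*\tfrac{\rho_0}{G_T*e^{-V/2\beta}}\big)(x)$. Reassembling $\rho=\eta\hat\eta$ yields the first line of \eqref{cfu}. To pass from \eqref{cfu} to \eqref{rho}, I would write both convolutions with $G_t(x,y)=(4\pi\beta t)^{-d/2}e^{-\|x-y\|^2/(4\beta t)}$, substitute $(G_T*e^{-V/2\beta})(y)=(4\pi\beta T)^{-d/2}\int e^{-\frac{1}{2\beta}(V(\tilde y)+\|y-\tilde y\|^2/(2T))}d\tilde y$ into the denominator, use $\|x-z\|^2/(4\beta(T-t))=\frac{1}{2\beta}\cdot\frac{\|x-z\|^2}{2(T-t)}$ (and likewise the $y$-term), and collect the three normalizing constants via $(4\pi\beta(T-t))^{-d/2}(4\pi\beta t)^{-d/2}(4\pi\beta T)^{d/2}=\big(4\pi\beta\,\tfrac{t(T-t)}{T}\big)^{-d/2}$; this reproduces \eqref{rho} verbatim, and \eqref{Phi} is immediate from the $\Phi$-formula in \eqref{cfu}.

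The heat-equation steps are routine; the only items deserving care are (i) recording that $\eta>0$ throughout, so the Hopf--Cole map is genuinely invertible and solving \eqref{heat_system} does return a solution of \eqref{PDE_pair} with the prescribed boundary data, and (ii) noting that the resulting $\rho$ is a bona fide probability density: mass is conserved because $\beta\Delta\rho-\nabla\cdot(\rho\nabla\Phi)$ is in divergence form, and $\rho\geq 0$ together with $\int\rho_0=1$ give $\int\rho(t,\cdot)\,dx=1$, consistently with the fact that the kernel $K(x,y)$ in \eqref{kernel_formula} integrates to $1$ in $x$ for each fixed $y$. I expect these bookkeeping points — rather than any genuine analytic difficulty — to be the main thing to get right.
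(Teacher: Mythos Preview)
Your proposal is correct and follows essentially the same route as the paper: solve the backward heat equation for $\eta$ from its terminal datum, use the initial constraint to read off $\hat\eta(0,\cdot)$, propagate $\hat\eta$ forward with the heat kernel, and then reassemble $\rho=\eta\hat\eta$ and $\Phi=2\beta\log\eta$. The extra bookkeeping you flag (positivity of $\eta$, collecting the normalizing constants to get \eqref{rho}, and mass conservation) is a nice touch but does not change the argument---the paper's proof is the same computation with those points either omitted or relegated to a remark.
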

\begin{proof}
Denote $\eta_t(x)=\eta(t,x)$. From $\partial_t\eta+\beta\Delta \eta=0$, we have
\begin{equation}\label{eta}
\begin{aligned}
\eta(t,x)=(G_{T-t}*\eta_T)(x)=(G_{T-t}*e^{-\frac{V}{2\beta}})(x).
\end{aligned}
\end{equation}
When $t=0$, we have
\begin{equation*}
\eta(0,x)=(G_T*\eta_T)(x)=(G_T*e^{-\frac{V}{2\beta}}).
\end{equation*}
From the initial time boundary condition $\rho_0(x)=\eta(0,x)\hat\eta(0,x)$, we have  
\begin{equation*}
\hat\eta(0,x)=\frac{\rho(0,x)}{\eta(0,x)}=\frac{\rho_0(x)}{(G_T*\eta_T)(x)}=\frac{\rho_0(x)}{(G_T*e^{-\frac{V}{2\beta}})(x)}.
\end{equation*}
And from $\partial_t\hat\eta-\beta\Delta\hat\eta=0$, we have 
\begin{equation}\label{heta}
\hat\eta(t,x)=(G_t*\hat\eta_0)(x)=(G_t*\frac{\rho_0}{G_T*e^{-\frac{V}{2\beta}}})(x). 
\end{equation}
Plugging solution \eqref{eta} and \eqref{heta} into Hopf-Cole transformation \eqref{CH}, we obtain 
\begin{equation*}
\rho(t,x)=\eta(t,x)\cdot\hat\eta(t,x)=(G_{T-t}*e^{-\frac{V}{2\beta}})(x)\cdot (G_t*\frac{\rho_0}{G_T*e^{-\frac{V}{2\beta}}})(x),
\end{equation*}
and 
\begin{equation*}
\Phi(t,x)=2\beta\log\eta(t,x) =2\beta \log (G_{T-t}*e^{-\frac{V}{2\beta}})(x). 
\end{equation*}
\end{proof}
In particular, we represent the solution $\rho(T,x)$ by an integral kernel formula. 
\begin{proposition}\label{prop5}
\begin{equation*}
\rho(T,x)=\int_{\mathbb{R}^d}K(x,y, \beta, T, V)\rho(0,y)dy,
\end{equation*}
where $K\colon \mathbb{R}^d\times\mathbb{R}^d\times \mathbb{R}_+\times \mathbb{R}_+\times C^1(\mathbb{R}^d)\rightarrow\mathbb{R}$ is the kernel function 
\begin{equation}\label{kernel}
K(x,y,\beta, T,V):=\frac{e^{-\frac{1}{2\beta}(V(x)+\frac{\|x-y\|^2}{2T})}}{\int_{\mathbb{R}^d}e^{-\frac{1}{2\beta}(V(z)+\frac{\|z-y\|^2}{2T})}dz}.
\end{equation}
\end{proposition}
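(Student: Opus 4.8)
The plan is to specialize the kernel solution formula \eqref{cfu} to the terminal time $t=T$ and then simplify the resulting Gaussians. First I would recall that the scaled heat kernel acts as an approximate identity: as $s\to 0^+$, $(G_s * f)(x)\to f(x)$ for any bounded continuous $f$. Since $V\in C^1(\mathbb{R}^d)$ is bounded, $e^{-V/2\beta}$ is bounded and continuous, so evaluating the first factor of $\rho(t,x)$ in \eqref{cfu} at $t=T$ gives $(G_{T-t}*e^{-V/2\beta})(x)\big|_{t=T}=e^{-V(x)/2\beta}$.

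Next, the second factor of \eqref{cfu} at $t=T$ is $\big(G_T*\tfrac{\rho_0}{G_T*e^{-V/2\beta}}\big)(x)=\int_{\mathbb{R}^d}G_T(x,y)\,\frac{\rho_0(y)}{(G_T*e^{-V/2\beta})(y)}\,dy$. Multiplying the two factors yields
\begin{equation*}
\rho(T,x)=\int_{\mathbb{R}^d}\frac{e^{-\frac{V(x)}{2\beta}}\,G_T(x,y)}{(G_T*e^{-V/2\beta})(y)}\,\rho_0(y)\,dy ,
\end{equation*}
so it remains only to identify the bracketed kernel with $K(x,y,\beta,T,V)$. Expanding $G_T(x,y)=(4\pi\beta T)^{-d/2}e^{-\|x-y\|^2/(4\beta T)}$ and using $\tfrac{1}{4\beta T}=\tfrac{1}{2\beta}\cdot\tfrac{1}{2T}$, the numerator becomes $(4\pi\beta T)^{-d/2}e^{-\frac{1}{2\beta}(V(x)+\frac{\|x-y\|^2}{2T})}$; likewise $(G_T*e^{-V/2\beta})(y)=(4\pi\beta T)^{-d/2}\int_{\mathbb{R}^d}e^{-\frac{1}{2\beta}(V(z)+\frac{\|z-y\|^2}{2T})}\,dz$. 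The prefactors $(4\pi\beta T)^{-d/2}$ cancel, producing exactly \eqref{kernel}.

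The computation itself is bookkeeping; the only genuine point requiring care is the passage to the closed endpoint $t=T$ in \eqref{cfu} — equivalently, justifying $G_0=\delta$ and that the formula stays valid up to $t=T$. This is precisely where the standing hypothesis $\int_{\mathbb{R}^d}e^{-\frac{1}{2\beta}(V(z)+\frac{\|z-y\|^2}{2T})}\,dz<+\infty$ enters: it guarantees $(G_T*e^{-V/2\beta})(y)\in(0,\infty)$ for every $y$, so the quotient $\rho_0/(G_T*e^{-V/2\beta})$ is well defined and the convolutions above converge absolutely. I would also remark that, for each fixed $y$, $K(\cdot,y,\beta,T,V)$ integrates to $1$ by construction, consistent with $\rho(T,\cdot)$ being a probability density whenever $\rho_0$ is.
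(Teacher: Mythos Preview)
Your proposal is correct and follows exactly the approach of the paper, which simply says ``apply $t=T$ in update \eqref{cfu} for the density function.'' You have merely unpacked that one-line remark with the explicit Gaussian bookkeeping and the approximate-identity observation $G_{T-t}*e^{-V/2\beta}\to e^{-V/2\beta}$ as $t\to T$, together with a remark on well-definedness, none of which the paper spells out.
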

\begin{proof}
We apply $t=T$ in update \eqref{cfu} for the density function. 
\end{proof}
\begin{remark}
One can check that $\rho(t,x)$ in formula \eqref{rho} is a probability density function. I.e., $\rho(t,x)\geq 0$, and $\int_{\mathbb{R}^d}\rho(t,x)dx=1$. The fact that $\int_{\mathbb{R}^d}\rho(t,x)dx$ is independent of $t$ follows the first equation in \eqref{PDE_pair}. In detail, formula \eqref{rho} can be recast as 
\begin{equation*}\label{rho1}
\rho(t,x)=\frac{1}{{(4\pi\beta\frac{t(T-t)}{T}})^{\frac{d}{2}}}\int_{\mathbb{R}^d}\int_{\mathbb{R}^d}\frac{e^{-\frac{1}{2\beta}(V(z)+\frac{\|y-z\|^2}{2T})}e^{-\frac{1}{2\beta}\frac{\|x-\frac{tz+(T-t)y}{T}\|^2}{2\frac{(T-t)t}{T}}}}{\int_{\mathbb{R}^d}e^{-\frac{1}{2\beta}(V(\tilde y)+\frac{\|y-\tilde y\|^2}{2T})}d\tilde y}\rho(0,y) dydz.
\end{equation*}
From this formula, we directly observe that $\rho(t,x)$ is a probability density function. 
\end{remark}

\begin{remark}
{We remark that $\Phi(t,x)$ in equation \eqref{Phi} is a solution for the viscous Hamilton-Jacobi equation, and $\nabla_x\Phi(0,x)=2\beta\nabla_x\log(G_{T}*e^{-\frac{V}{2\beta}})(x)$ approximates the proximal operator of $V(x)$ in Euclidean space; see \cite{HFO, OHF}. } 
\end{remark}
\begin{remark}
We remark that Hopf-Cole type transformations have been used in studying Schr{\"o}dinger bridge systems \cite{CGP, LeL,Leonard2013_surveya,Yasue1981_stochastica,Zambrini1986}. One can construct Schr{\"o}dinger-F{\"o}llmer diffusion processes \cite{fol88}, which have applications in global optimization problems \cite{D4}. However, we are using different initial-terminal boundary conditions in the usual Schr{\"o}dinger bridge systems with fixed initial and terminal densities. The proposed system \eqref{PDE_pair} has fully closed-form solutions in both density function $\rho$ and potential function $\Phi$. The regularized Wasserstein proximal of linear energies have closed-form representations. Our approach can also work for nonlinear energies. See equations~\eqref{nonlinear}, \eqref{nonlinear_kernel} in the next section. 
\end{remark}
We also present an analytical example of the integral kernel formula in Proposition \ref{prop5}. 
\begin{example}
Consider
\begin{equation*}
V(x)=\frac{1}{2}\|x-x_0\|^2,
\end{equation*}
where $x_0\in \mathbb{R}^d$ is a fixed vector. From \eqref{Phi}, we have 
\begin{equation*}
\begin{split}
\Phi(t,x)=&2\beta \log\Big(\int_{\mathbb{R}^d}\frac{1}{(4\pi\beta (T-t))^{\frac{d}{2}}}e^{-\frac{1}{2\beta}(\frac{\|y-x_0\|^2}{2}+\frac{\|x-y\|^2}{2(T-t)})}dy\Big)\\
=& \beta d\log\frac{1}{T-t+1}-\frac{\|x-x_0\|^2}{2(T-t+1)}.
\end{split}
\end{equation*}
From \eqref{rho}, we have
\begin{equation*}
\begin{split}
\rho(t,x)=&\frac{1}{{(4\pi\beta\frac{t(T-t)}{T}})^{\frac{d}{2}}}\int_{\mathbb{R}^d}\int_{\mathbb{R}^d}\frac{e^{-\frac{1}{2\beta}(\frac{1}{2}\|z-x_0\|^2+\frac{\|x-z\|^2}{2(T-t)}+\frac{\|x-y\|^2}{2t})}}{\int_{\mathbb{R}^d}e^{-\frac{1}{2\beta}(\frac{1}{2}\|\tilde y-x_0\|^2+\frac{\|y-\tilde y\|^2}{2T})}d\tilde y}\rho_0(y) dydz\\
=&\frac{1}{{(4\pi\beta\frac{t(T-t+1)}{T+1}})^{\frac{d}{2}}}\int_{\mathbb{R}^d}e^{-\frac{1}{4\beta \frac{t(T-t+1)}{T+1}}(x-\frac{\frac{y}{t}+\frac{x_0}{T-t+1}}{\frac{1}{t}+\frac{1}{T-t+1}})^2}\rho_0(y)dy. 
\end{split}
\end{equation*}
From \eqref{kernel}, we obtain
\begin{equation*}
\begin{split}
\rho(T,x)=&\int_{\mathbb{R}^d}\frac{e^{-\frac{1}{2\beta}(\frac{1}{2}\|x-x_0\|^2+\frac{\|x-y\|^2}{2T})}}{\int_{\mathbb{R}^d}e^{-\frac{1}{2\beta}(\frac{1}{2}\|z-x_0\|^2+\frac{\|z-y\|^2}{2T})}dz}\rho_0(y)dy\\
=&\frac{1}{{(4\pi\beta\frac{T}{T+1}})^{\frac{d}{2}}}\int_{\mathbb{R}^d}e^{-\frac{1}{4\beta \frac{T}{T+1}}(x-\frac{\frac{y}{T}+x_0}{\frac{1}{T}+1})^2}\rho_0(y)dy.
\end{split}
\end{equation*}
\end{example}

\section{Regularized Wasserstein proximal operators of general energies}\label{sec4}
In this section, we study the regularized Wasserstein proximal operator of general energy functionals.  

Consider a functional $\mathcal{F}\colon \mathcal{P}_2(\mathbb{R}^d)\rightarrow\mathbb{R}$. 
We study the proximal operator of energy $\mathcal{F}$ in Wasserstein-2 space.
\begin{equation*}
\rho_T=\mathrm{WProx}_{\mathcal{F}}(\rho_0):=\arg\min_{q\in \mathcal{P}_2(\mathbb{R}^d)} \quad \mathcal{F}(q)+\frac{\mathcal{W}(\rho_0, q)^2}{2T}. 
\end{equation*}
Computing the Wasserstein proximal operator usually requires an optimization step. Instead, we study a regularized optimal control problem. 
\begin{definition}
Denote $\beta>0$ as a scalar. Consider the following mean-field control problem:
\begin{equation}\label{variation1}
\inf_{\rho, v, q} \quad \int_0^T\int_{\mathbb{R}^d}\frac{1}{2}\|v(t,x)\|^2\rho(t,x)dxdt+\mathcal{F}(q),  
\end{equation}
where the minimizer is taken among all vector fields $v\colon [0, T]\times\mathbb{R}^d\rightarrow\mathbb{R}^d$, density functions $\rho\colon [0, T)\times \mathbb{R}^d\rightarrow\mathbb{R}$ and terminal time density function $q\colon \mathbb{R}^d\rightarrow\mathbb{R}$, such that 
\begin{equation*}
\partial_t\rho(t,x)+\nabla\cdot(\rho(t,x)v(t,x))=\beta \Delta \rho(t,x),\quad \rho(0,x)=\rho_0(x), \quad \rho(T,x)=q(x).
\end{equation*}

We denote the regularized proximal operator as
\begin{equation*}
\rho_T:=\mathrm{WProx}_{T\mathcal{F},\beta}(\rho_0), 
\end{equation*}
where $\rho_T$ is the solution of density function in variational problem \eqref{variation1} at time $t=T$. 
\end{definition}
We can also represent the approximate proximal operator in a kernel representation. 
\begin{proposition}
Denote the $L^2$ first variation of functional $\mathcal{F}$ as 
\begin{equation*}
\frac{\delta}{\delta\rho}\mathcal{F}(\rho)(x)=F(x,\rho). 
\end{equation*}
Then 
\begin{equation}\label{nonlinear}
\rho(T,x)=\int_{\mathbb{R}^d}K(x,y, \beta, T, \rho_T)\rho(0,y)dy,
\end{equation}
where we define a kernel function $K\colon \mathbb{R}^d\times\mathbb{R}^d\times \mathbb{R}_+\times\mathbb{R}_+\times \mathcal{P}_2(\mathbb{R}^d)\rightarrow\mathbb{R}$ as
\begin{equation}\label{nonlinear_kernel}
K(x,y,\beta, T, \rho_T):=\frac{e^{-\frac{1}{2\beta}(F(x,\rho_T)+\frac{\|x-y\|^2}{2T})}}{\int_{\mathbb{R}^d}e^{-\frac{1}{2\beta}(F(z,\rho_T)+\frac{\|z-y\|^2}{2T})}dz}.
\end{equation}
\end{proposition}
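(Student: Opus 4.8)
The plan is to reuse, essentially word for word, the three ingredients of the linear theory: the saddle-point derivation of the optimality system \eqref{PDE_pair}, the Hopf--Cole transformation \eqref{CH}, and the heat-kernel representation \eqref{cfu}. The only genuinely new feature is that the terminal condition on $\Phi$ now involves the first variation of $\mathcal{F}$, so the effective potential $F(\cdot,\rho_T)$ must be treated as frozen at the minimizer once the stationarity conditions have been written.

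First I would form the Lagrangian for the mean-field control problem \eqref{variation1} exactly as in the linear case, introducing the multiplier $\Phi$ for the Fokker--Planck constraint $\partial_t\rho+\nabla\cdot(\rho v)-\beta\Delta\rho=0$ and integrating by parts in $t$ and $x$. Varying in $v$, in $\Phi$, and in $\rho$ on the open interval $[0,T)$ gives, as before, $v=\nabla\Phi$, the forward Fokker--Planck equation, and the backward viscous Hamilton--Jacobi equation $\partial_t\Phi+\frac12\|\nabla\Phi\|^2+\beta\Delta\Phi=0$, i.e.\ the first two lines of \eqref{PDE_pair}. The one change is in the variation with respect to the free terminal density $\rho_T=\rho(T,\cdot)$: the variation of the terminal cost $\mathcal{F}(\rho(T,\cdot))$ is $F(x,\rho_T)=\frac{\delta}{\delta\rho}\mathcal{F}(\rho_T)(x)$, so $\frac{\delta}{\delta\rho_T}\mathcal{L}=0$ becomes $\Phi(T,x)+F(x,\rho_T)=0$, that is, $\Phi(T,x)=-F(x,\rho_T)$.

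Next, holding $\rho_T$ fixed, I would set $\widetilde V(x):=F(x,\rho_T)$ and apply the Hopf--Cole substitution $\eta=e^{\Phi/2\beta}$, $\hat\eta=\rho e^{-\Phi/2\beta}$ verbatim, turning the system into the forward-backward heat equations \eqref{heat_system} with $V$ replaced by $\widetilde V$, and then invoke the kernel solution \eqref{cfu} (again with $V\mapsto\widetilde V$), which requires only the finiteness of $\int_{\mathbb{R}^d}e^{-\frac{1}{2\beta}(\widetilde V(z)+\frac{\|z-y\|^2}{2T})}dz$, the exact analogue of the hypothesis used for \eqref{rho}. Evaluating \eqref{cfu} at $t=T$: the factor $(G_{T-t}*e^{-\widetilde V/2\beta})(x)$ tends to $e^{-\widetilde V(x)/2\beta}$ as $t\to T$ (approximation of the identity by the heat semigroup), while the surviving factor $(G_T*\tfrac{\rho_0}{G_T*e^{-\widetilde V/2\beta}})(x)$ combines with it to give
\[
\rho(T,x)=\int_{\mathbb{R}^d}\frac{e^{-\frac{1}{2\beta}(\widetilde V(x)+\frac{\|x-y\|^2}{2T})}}{\int_{\mathbb{R}^d}e^{-\frac{1}{2\beta}(\widetilde V(z)+\frac{\|z-y\|^2}{2T})}dz}\,\rho(0,y)\,dy .
\]
Substituting back $\widetilde V=F(\cdot,\rho_T)$ yields exactly \eqref{nonlinear}--\eqref{nonlinear_kernel}.

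The main obstacle is conceptual rather than technical: the terminal datum $\Phi(T,\cdot)=-F(\cdot,\rho_T)$ is self-referential, so the identity above is a nonlinear fixed-point equation for $\rho_T$ rather than a closed-form update, and it should be presented as a characterization of the critical-point density under standing hypotheses --- existence of a minimizer of \eqref{variation1}, $F(\cdot,\rho_T)\in C^1(\mathbb{R}^d)$, and finiteness of the normalizing integral. A secondary care point is justifying the limit $t\to T$ inside the convolution $(G_{T-t}*e^{-\widetilde V/2\beta})(x)$: this needs $e^{-\widetilde V/2\beta}$ continuous with at most suitable growth, the same approximation-of-identity input already used implicitly when passing to \eqref{rho}, now applied to $\widetilde V$ in place of $V$.
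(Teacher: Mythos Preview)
Your proposal is correct and follows essentially the same approach as the paper: the paper's proof is a single sentence stating that one repeats the derivations of Sections~\ref{sec2} and~\ref{sec3} with $V(x)$ replaced by $\frac{\delta}{\delta\rho}\mathcal{F}(\rho)(x)=F(x,\rho)$, which is exactly what you carry out in detail. Your additional remarks about the self-referential nature of the terminal condition and the resulting fixed-point structure are apt and in fact anticipate the paper's own Remark following the proposition.
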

\begin{proof}
The proof follows from those in sections \ref{sec2} and \ref{sec3}. We only need to replace $V(x)$ by $\frac{\delta}{\delta\rho}\mathcal{F}(\rho)(x)=F(x,\rho)$.
\end{proof}
\begin{remark}
We remark that for a general functional $\mathcal{F}(\rho)$, the kernel formula $K$ depends on $\rho_T$. We no longer have an explicit expression of $\rho_T$.  Instead, we make use of equation \eqref{nonlinear}. 
\end{remark}
\subsection{Examples}
In this section, we present some examples of kernel representations.  
Consider an energy functional  
\begin{equation*}
\mathcal{F}(\rho)=\int_{\mathbb{R}^d} V(x)\rho(x)dx+\frac{1}{2}\int_{\mathbb{R}^d} \int_{\mathbb{R}^d}W(x-y)\rho(x)\rho(y)dxdy+\int_{\mathbb{R}^d}U(\rho(x))dx,
\end{equation*}
where $V\in C^{1}(\mathbb{R}^d; \mathbb{R})$, $W \in C^{1}(\mathbb{R}^{d}; \mathbb{R})$ is an interaction kernel function, with $W(z)=W(-z)$, $z\in \mathbb{R}^d$, and $U\in C^1(\mathbb{R}^d;\mathbb{R})$ is an internal potential function. 
The $L^2$ first variation of $\mathcal{F}(\rho)$ satisfies 
\begin{equation*}
F(x,\rho)=\frac{\delta}{\delta\rho}\mathcal{F}(\rho)(x)=V(x)+(W*\rho)(x)+U'(\rho(x)).
\end{equation*}
Thus the kernel function \eqref{nonlinear_kernel} satisfies
\begin{equation*}
K(x,y,\beta, T, \rho_T):=\frac{e^{-\frac{1}{2\beta}(V(x)+(W*\rho_T)(x)+U'(\rho_T(x))+\frac{\|x-y\|^2}{2T})}}{\int_{\mathbb{R}^d}e^{-\frac{1}{2\beta}(V(z)+(W*\rho_T)(z)+U'(\rho_T(z))+\frac{\|z-y\|^2}{2T})}dz}.
\end{equation*}
And the regularization Wasserstein proximal equation forms 
\begin{equation*}
\rho_T(x)=\int_{\mathbb{R}^d}K(x,y,\beta, T,\rho_T)\rho_0(y)dy. 
\end{equation*}
We next present some examples for equation \eqref{nonlinear} and kernel function \eqref{nonlinear_kernel}.
\begin{example}
Let $V=U=0$. Then the kernel function \eqref{nonlinear_kernel} satisfies 
\begin{equation*}
K(x,y,\beta, T, \rho_T):=\frac{e^{-\frac{1}{2\beta}((W*\rho_T)(x)+\frac{\|x-y\|^2}{2T})}}{\int_{\mathbb{R}^d}e^{-\frac{1}{2\beta}((W*\rho_T)(z)+\frac{\|z-y\|^2}{2T})}dz}.
\end{equation*}
And the regularized Wasserstein proximal equation satisfies 
\begin{equation*}
\rho_T(x)=\int_{\mathbb{R}^d}\frac{e^{-\frac{1}{2\beta}((W*\rho_T)(x)+\frac{\|x-y\|^2}{2T})}}{\int_{\mathbb{R}^d}e^{-\frac{1}{2\beta}((W*\rho_T)(z)+\frac{\|z-y\|^2}{2T})}dz}\rho_0(y)dy. 
\end{equation*}
\end{example}

\begin{example}
Let $V=W=0$. Then the kernel function \eqref{nonlinear_kernel} satisfies 
\begin{equation*}
K(x,y,\beta, T, \rho_T):=\frac{e^{-\frac{1}{2\beta}(U'(\rho_T(x))+\frac{\|x-y\|^2}{2T})}}{\int_{\mathbb{R}^d}e^{-\frac{1}{2\beta}(U'(\rho_T(z))+\frac{\|z-y\|^2}{2T})}dz}.
\end{equation*}
And the regularized Wasserstein proximal equation satisfies 
\begin{equation*}
\rho_T(x)=\int_{\mathbb{R}^d}\frac{e^{-\frac{1}{2\beta}(U'(\rho_T(x))+\frac{\|x-y\|^2}{2T})}}{\int_{\mathbb{R}^d}e^{-\frac{1}{2\beta}(U'(\rho_T(z))+\frac{\|z-y\|^2}{2T})}dz}\rho_0(y)dy.
\end{equation*}
\end{example}
\section{Numerical Experiments}\label{section5}
In this section, we conduct several computational examples to numerically verify the kernel formulations \eqref{kernel} and \eqref{nonlinear_kernel}. We also discuss a fixed point iteration algorithm that uses the kernel formulation to calculate the Wasserstein proximal operator of general functionals.

\subsection{Validation the kernel formulation with optimization method}
\subsubsection{Solve the equivalent variational problem via primal-dual hybrid gradient (PDHG) method}
To numerically verify kernel formulations, we compute the mean-field control problem \eqref{variation1} via the primal-dual hybrid gradient approach. By introducing the flux variable $m = \rho v$, we can rewrite the mean-field control problem as the following constrained optimization problem:
\begin{equation}\label{variation_flux}
\inf_{\rho, m, q} \quad \int_0^T\int_{\mathbb{R}^d}\frac{\|m(t,x)\|^2}{2\rho(t,x)}dxdt+\mathcal{F}(q),  \end{equation}
such that 
\begin{equation*}
\partial_t\rho(t,x)+\nabla\cdot m(t,x)=\beta \Delta \rho(t,x),\quad \rho(0,x)=\rho_0(x), \quad \rho(T,x)=q(x).
\end{equation*}
By introducing a Lagrangian multiplier $\Phi$, we write optimization problem \eqref{variation_flux} into the following saddle point problem:
\begin{equation} \label{eq:saddle}
    \inf_{\substack{\rho, m, q\\ \rho(0,\cdot)=\rho_0(\cdot)\\\rho(T,\cdot)=q(\cdot)}} \sup_{\Phi}\quad  \mathcal{L}(\rho,m,q,\Phi),
\end{equation}
where
\begin{equation} 
\begin{split} \label{eq:saddle_L}
\mathcal{L}(\rho, m, q, \Phi):=&\quad\int_0^T \int_{\mathbb{R}^d}\frac{\|m(t,x)\|^2}{2\rho(t,x)}dx dt+ \mathcal{F}(q)\\
&+\int_0^T\int_{\mathbb{R}^d} \Phi(t,x) \Big(\partial_t\rho(t,x)+\nabla\cdot m(t,x)-\beta\Delta\rho(t,x)\Big)dxdt.
\end{split}
\end{equation}

Saddle point problem \eqref{eq:saddle} is convex in $(\rho, m, q)$ and concave in $(\Phi)$. We can directly apply the primal-dual hybrid gradient method (PDHG)~\cite{champock11} to solve problem \eqref{eq:saddle}. The algorithm takes proximal updates in the primal variables $(\rho, m, q)$ and the dual variables $(\Phi)$ iteratively, with extra extrapolating steps. For more details on using the PDHG algorithm to solve mean-field control problems, we refer to \cite{Liu_mfg}. 
\subsubsection{Discretization Scheme}
{For simplicity, we consider a one dimensional finite domain $\Omega = [ -b, b]$ with the periodic boundary condition. In the following examples, we set up a large enough finite spatial domain with density $\rho$ vanishing near the boundary.}
 We use a uniform mesh for both spatial and time intervals,
 with $h_x = \frac{2b}{N_x}, h_t = \frac{T}{N_T}$, 
 for $N_x, N_T>0$, and $(t_l, x_j) = (l h_t,j h_x - b)$, for $l = 0,\cdots, N_t$, $j = 0,\cdots, N_x - 1$.
 For variables $\rho, \Phi, q$, we denote the following grid point approximation:
 \begin{align*}
     \rho_j^l = \rho(t_l,x_j),\;\quad \Phi_j^l = \Phi(t_l,x_j),\;\quad q_j = q(x_j).
 \end{align*}
 As for the flux variable $m$, we denote 
 \begin{align*}
     & m_j^l = [ m_j^{+,l} +  m_j^{-,l}], \quad m_j^{+,l} := \left(m(t_l,x_j)\right)^+, \quad  m_j^{-,l} := -\left(m(t_l,x_j)\right)^-;\\
      & \| m_j^l\|^2 = \left(m_j^{+,l} \right)^2 + \left(m_j^{-,l} \right)^2,\quad  \nabla_x \cdot m_j^l  = \frac{m_j^{+,l} - m_{j-1}^{+,l} }{h_x} +  \frac{m_{j+1}^{-,l} - m_{j}^{-,l} }{h_x}, 
 \end{align*}
where $u^+ := \max(u, 0)$ and $u^-= u^+-u$. We discretize the above variational problem with the finite difference method and obtain the following first-order scheme:
\begin{align*}
       \hat{\mathcal{L}}(\rho,m,\Phi,q)  =&\quad h_t h_x\sum_{\substack{1 \leq l \leq N_t\\  1\leq j \leq N_x}} \left( \frac{ \| m_j^{l} \|^2}{2\rho_j^l}      \right) + \hat{\mathcal{F}}(q) \\  
       & + h_t h_x\sum_{\substack{0 \leq l \leq N_t-1\\  1\leq j \leq N_x}}  \Phi^l_j \left(  \dfrac{\rho^{l+1}_j -\rho^l_j }{h_t} + {\nabla_x \cdot m_j^{l+1}} -\beta \frac{\rho^{l+1}_{j+1} + \rho^{l+1}_{j-1} -2\rho^{l+1}_{j}  }{h_x^2}\right). 
\end{align*}
More specifically, we consider the discretized energy functional $\hat{\mathcal{F}}$ as below:
\begin{align*}
\hat{\mathcal{F}}(q)  = h_x\sum_{\substack{ 1\leq j \leq N_x}} \left( V(x_j) q_j+ U\left( q_j\right)\right) + \frac{ h_x^2}{2} \sum_{\substack{ 1\leq i \leq N_x\\ 1\leq j \leq N_x}} \left( W(x_i-x_j)q_i q_j\right).
\end{align*}
Here, we denote $\rho_{M}^T(x): = \rho(T,x)$ and use it to compare with the solution obtained from the kernel formulation.

As for computation using kernel formulation, we use the Riemann sum.
Taking the linear energy functional as an example, i.e., $\mathcal{F}(\rho)=\int_{\Omega} V(x)\rho(x)dx$, the kernel function in equation~\eqref{kernel} can be approximated by the following form:
\begin{equation*}\label{kernel_riemann}
K(x_i,y_j,\beta, T,V) = \frac{e^{-\frac{1}{2\beta}(V(x_i)+\frac{\|x_i-y_j\|^2}{2T})}}{ h_x\sum_{\substack{ 1\leq k \leq N_x}} e^{-\frac{1}{2\beta}(V(z_k)+\frac{\|z_k-y_j\|^2}{2T})}}.
\end{equation*}
Hence the solution $\rho(T,x_i)$ via kernel integral formula can be expressed  as
\begin{equation*}
\rho(T,x_i)=  h_x\sum_{\substack{ 1\leq j \leq N_x}} K(x_i,y_j, \beta, T, V)\rho(0,y_j).
\end{equation*}
From now on, we denote $\rho_{K}^T(x): = \rho(T,x)$. 
\subsubsection{Example A}\label{subsec:eg1}
In the first example, we consider the regularized Wasserstein proximal operator of linear energy $\mathcal{F}(\rho)=\int_{\Omega} V(x)\rho(x)dx$.
For example parameters, we have the following setup:
\begin{align*}
   & b=5, T = 0.2, \beta =0.25,\\
    &    V(x) =  e^{-\frac{(x+0.25)^2}{0.5}},\\
     &   \rho_0(x) = \frac{1}{\sigma_0 \sqrt{2\pi}} e^{-\frac{\left(x-0.25 \right)^2}{2\sigma_0^2}}, \;\sigma_0 = 0.1.
\end{align*}
By applying different meshes $N_x, N_t$, we record the difference between $\rho_{M}^T$ (obtained via the optimization approach) and $\rho_K^T$ (obtained via the kernel formula).
From Table~\ref{tab:linear_convergence}, we observe the consistent convergence as refining the mesh grid, which suggests that our kernel formula is precise when the energy functional is of linear form. In Figure~\ref{fig:example_a_linear}, we show the numerical solutions of Example A.
\begin{table}[!htp]\centering
\begin{tabular}{lrrrr}\toprule
$h_x$ &$0.0625$ &$0.05$ &$0.0313$ \\
\midrule
$\|\rho_K^T - \rho_M^T\|_{L^2}$ &$0.0016$ &$0.0011$ &$0.0007$ \\
\bottomrule
\end{tabular}
\caption{Record of $\|\rho_K^T - \rho_M^T\|_{L^2}$ with different meshes of Example A.}\label{tab:linear_convergence}
\end{table}
	\begin{figure}[htbp!]
		\includegraphics[width=0.8\textwidth,trim=50 0 50 0, clip=fale]{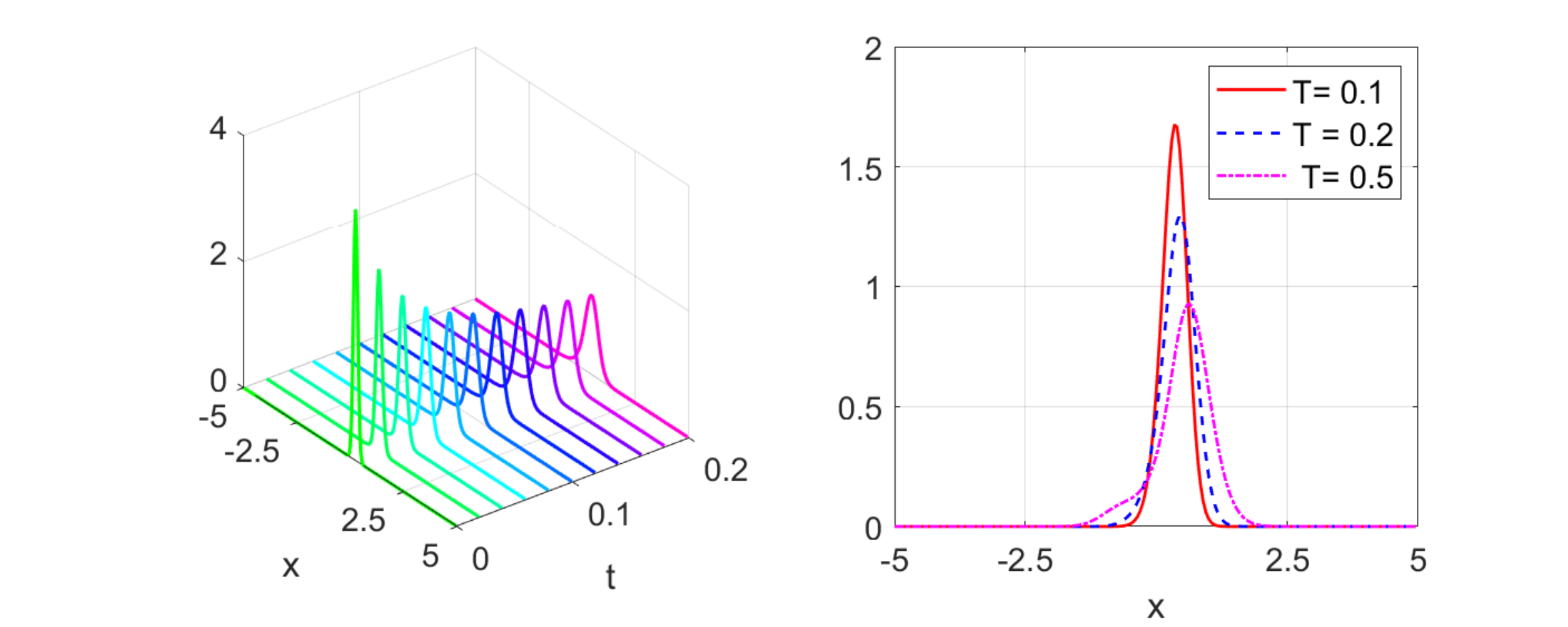}
		\caption{Numerical results for Example~A. On the left, we show the time evolution of $\rho(t,x)$ over the time interval $[0,0.2]$. On the right, we vary the coefficient $T = 0.1, 0.2, 0.5$ to see how the parameter $T$ affects the regularized Wasserstein proximal operator and its resulting density distribution.}
		\label{fig:example_a_linear}
	\end{figure}

\subsubsection{Example B}\label{subsec:eg2}
We now consider $\mathcal{F}(\rho)=\frac{1}{2}\int_{\Omega} \int_{\Omega}W(x-y)\rho(x)\rho(y)dxdy$, where the energy functional is quadratic on $\rho$. Specifically, we have
\begin{align*}
   & b=5, T = 0.1, \beta =0.1,\\
    &    W(x-y) = \lambda_W (x-y)^2,\; \lambda_W = 0.2,\\
     &    \rho_0(x) = \frac{1}{\sigma_0 \sqrt{2\pi}} e^{-\frac{\left(x-0.5 \right)^2}{2\sigma_0^2}},\;\sigma_0 = 0.1 .
\end{align*}
With mesh $N_x$, we first solve the corresponding variational problem and obtain $\rho_M^T$. Then we apply the kernel formulation to obtain \begin{align*}
  \rho_K^T(x) = \int_{\Omega} K(x,y,\beta,T,\rho_M^T) \rho_0(y)dy.
\end{align*}
By varying the mesh, we obtained the difference $\|\rho_K^T - \rho_M^T\|_{L^2}$ converges to $0$ with the order $O(h_x)$. See Table~\ref{tab:nonlocal_convergence}. This observation suggests that the kernel formulation could be applied to the computation of Wasserstein proximal of interaction energy. 
\begin{table}[!htp]\centering
\begin{tabular}{lrrrr}\toprule
$h_x$ &$0.0625$ &$0.05$ &$0.0313$ \\
\midrule
$\|\rho_K^T - \rho_M^T\|_{L^2}$ &$6.3e-4$	&$ 4.6e-4$	& $3.1e-4$ \\
\bottomrule
\end{tabular}
\caption{Record of $\|\rho_K^T - \rho_M^T\|_{L^2}$ with different meshes of Example B.} \label{tab:nonlocal_convergence}
\end{table}

\subsubsection{Example C}\label{subsec:eg3}
In this example, we consider the energy being the Kullback--Leibler (KL) divergence, i.e.,  $\mathcal{F}(\rho) = \lambda_F \int_{\Omega} \rho(x)\log(\frac{\rho(x)}{\rho_F(x)}) dx$. In detail, the example parameters are as follows:
\begin{align*}
   & b=5, T = 0.2, \beta =0.1, \lambda_F = 0.1,\\
    &    \rho_F(x) = \frac{1}{\sigma_F \sqrt{2\pi}} e^{-\frac{x^2}{2\sigma_F^2}}, \;\sigma_F = 0.4,\\
     &   \rho_0(x) = \frac{1}{\sigma_0 \sqrt{2\pi}} e^{-\frac{\left(x-0.5 \right)^2}{2\sigma_0^2}}, \;\sigma_0 = 0.2 .
\end{align*}
To avoid numerical instability, we also modify the KL divergence with
\begin{align*}
\mathcal{F}(\rho) = \lambda_F \int_{\Omega} \rho(x)\log(\frac{\rho(x) +\epsilon}{\rho_F(x) + \epsilon}) dx, \; \epsilon = 1e-4.
\end{align*}

Similarly to Example~B, we first solve the corresponding mean-field control problem with solution $\rho_M^T$. Via kernel formulation, we arrive at $ \rho_K^T(x) = \int_{\Omega} K(x,y,\beta,T,\rho_M^T) \rho_0(y)dy$.
By refining the mesh, we observe the consistent convergence of numerical solutions. See Table~\ref{tab:KL_convergence}.
This numerical result also suggests that, with proper regularization parameters, the kernel formula of the regularized Wasserstein proximal operator is valid for general functions. We present the regularized Wasserstein proximal and the time-evolution of $(\rho(t,\cdot), \Phi(t,\cdot))$ in Figure~\ref{fig:example_c_KL}.

\begin{table}[!htp]\centering
\begin{tabular}{lrrrr}\toprule
$h_x$ &$0.0625$ &$0.05$ &$0.0313$ \\
\midrule
$\|\rho_K^T - \rho_M^T\|_{L^2}$ &$2.5e-4$ &$1.7e-4$ &$7.8e-5$ \\
\bottomrule
\end{tabular}
\caption{Record of $\|\rho_K^T - \rho_M^T\|_{L^2}$ with different meshes of Example C.}\label{tab:KL_convergence}
\end{table}

	\begin{figure}[htbp!]
		\includegraphics[width=0.8\textwidth,trim=50 0 50 0, clip=fale]{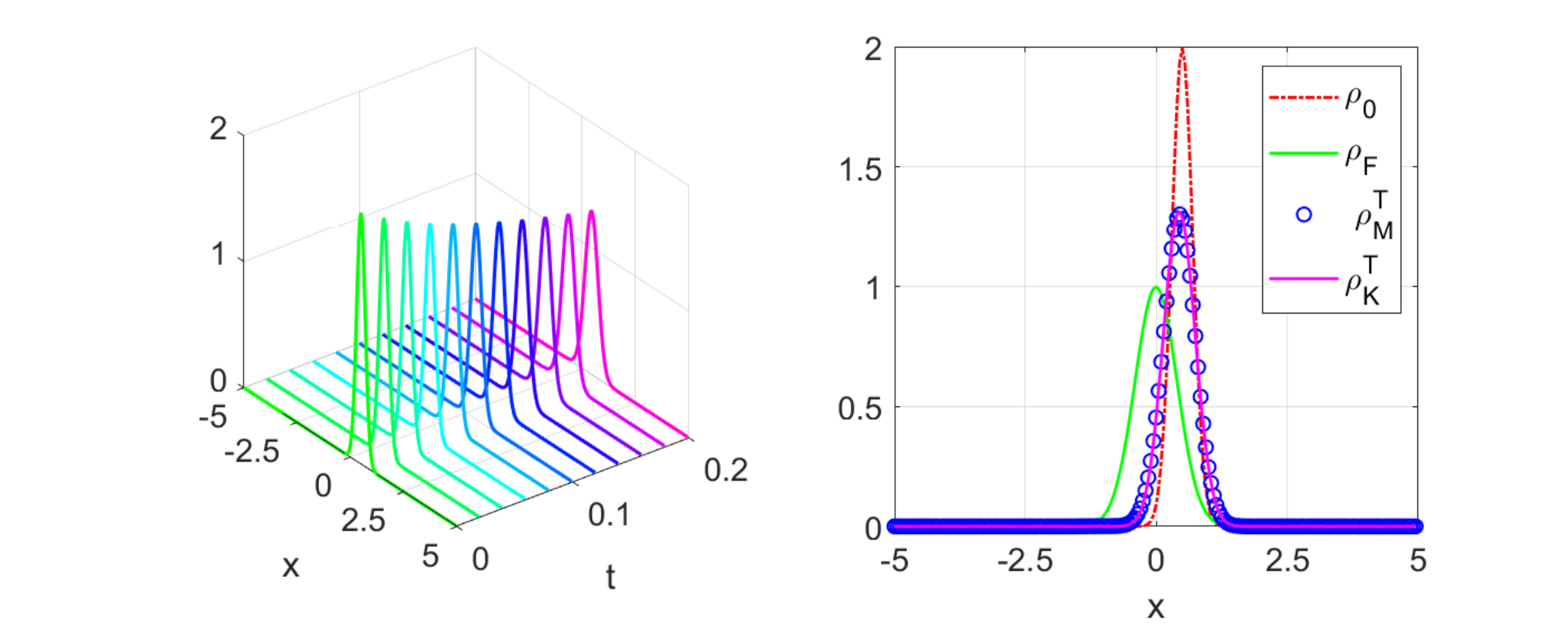}
		\caption{Numerical results for Example~C. On the left, we show the time evolution of $\rho(t,x)$ over the time interval $[0,0.2]$. On the right, we plot the example parameters $\rho_0, \rho_F$, the mean-field control problem solution $\rho_M^T$, and the density $\rho_K^T(x)$ obtained via the kernel formulation.}
		\label{fig:example_c_KL}
	\end{figure}

\subsection{A fixed-point type iterative approach to compute the regularized Wasserstein proximal}
For general functional $\mathcal{F}(\rho)$, the kernel formulation can be modified as a fixed point map, i.e.,
\begin{align}\label{scheme:fix_point}
    \rho_{itr}^{n}(x)=\int_{\Omega}K(x,y,\beta, T,\rho_{itr}^{n-1})\rho_0(y)dy, \quad n = 1,2,...,N. 
\end{align}
Such an iteration scheme can efficiently calculate the regularized Wasserstein proximal operator.

We provide a numerical example to show that, with proper regularization ($\beta>0$), the scheme~\eqref{scheme:fix_point} is a fixed point iteration. Moreover, its limit is the solution to the regularized Wasserstein proximal.

We first consider the regularized Wasserstein proximal from Example~C in section \ref{subsec:eg3}. To apply the fixed point iteration, we fix the mesh size $h_x$ and set $\rho_{itr}^0 = \rho_0$ as the initialization.
Then we apply the scheme~\eqref{scheme:fix_point} via a Riemann summation. To check if the fixed point iteration limit is the regularized Wasserstein proximal, we compute the scheme~\eqref{scheme:fix_point} with different mesh $N_x$ and compare it with the solution of the corresponding variational problem. Table~\ref{tab:fixed_pt} suggests that the limit point of the iteration~\eqref{scheme:fix_point} matches nicely. We also present the iteration details in Figure~\ref{fig:example_fixpoint}, which shows the fast convergence of the fixed point iteration.

\begin{table}[!htp]\centering
\begin{tabular}{lrrrr}\toprule
$h_x$ &$0.0625$ &$0.05$ &$0.0313$ \\
\midrule
$\|\rho_{itr}^{20} - \rho_M^T\|_{L^2}$ &$1.2e-2$ &$9.5e-3$ &$6.4e-3$ \\
\bottomrule
\end{tabular}
\caption{Record of $\|\rho_{itr}^{20} - \rho_M^T\|_{L^2}$ with different meshes of Example 3 using the fixed-point iteration.}\label{tab:fixed_pt}
\end{table}

\begin{figure}[htbp!]
    \includegraphics[width=0.8\textwidth,trim=50 0 50 0, clip=fale]{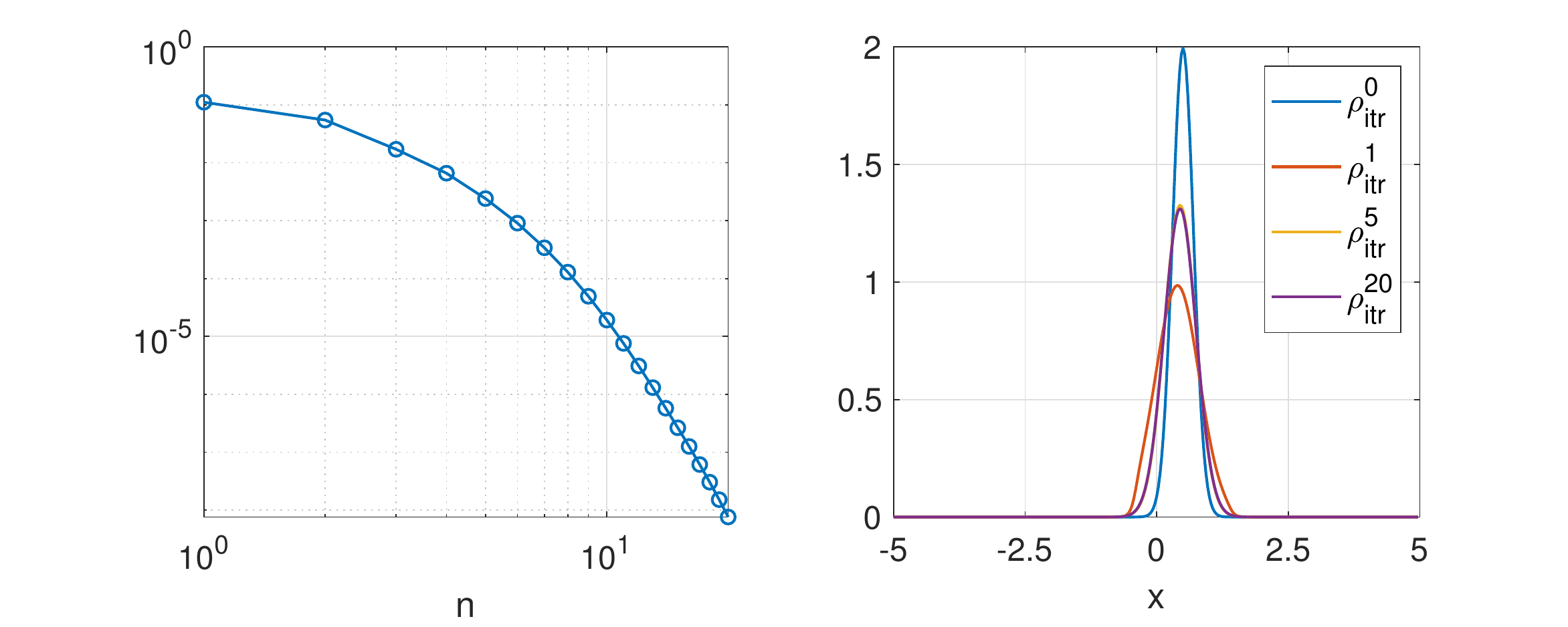}
\caption{Numerical results for the fixed-point iteration. On the left, we plot the difference in two adjacent iterations, i.e.,  $\|\rho_{itr}^n - \rho_{itr}^{n-1}\|_{L^2}$ for the first $20$ iterations. On the right, we plot some intermediate points of the iteration process. We observe that at the $5$th iteration, the density profile is already very close to the limit in the $L^2$ norm.}
\label{fig:example_fixpoint}
\end{figure}

Let us consider another example with $\mathcal{F}(\rho) = 0.5\int_\Omega\rho(x)^2dx$, and apply the fixed-point iteration \eqref{scheme:fix_point}. For other parameters, we set
\begin{align*}
   & b=5, T = 1, \beta =0.25,\\
     &    \rho_0(x) = \frac{1}{\sigma_0 \sqrt{2\pi}} e^{-\frac{\left(x-0.5 \right)^2}{2\sigma_0^2}},\;\sigma_0 = 0.1 .
\end{align*}
Numerical results are shown in Figure~\ref{fig:example_fixpoint_d}. We also compare the fixed-point iterations using the kernel formula with the density $\rho_M^T$ obtained by solving the corresponding variational problem \eqref{variation_flux}. These solutions match pretty well after $40$ steps of the fixed-point iteration. 
\begin{figure}[htbp!]
    \includegraphics[width=0.8\textwidth,trim=50 0 50 0, clip=fale]{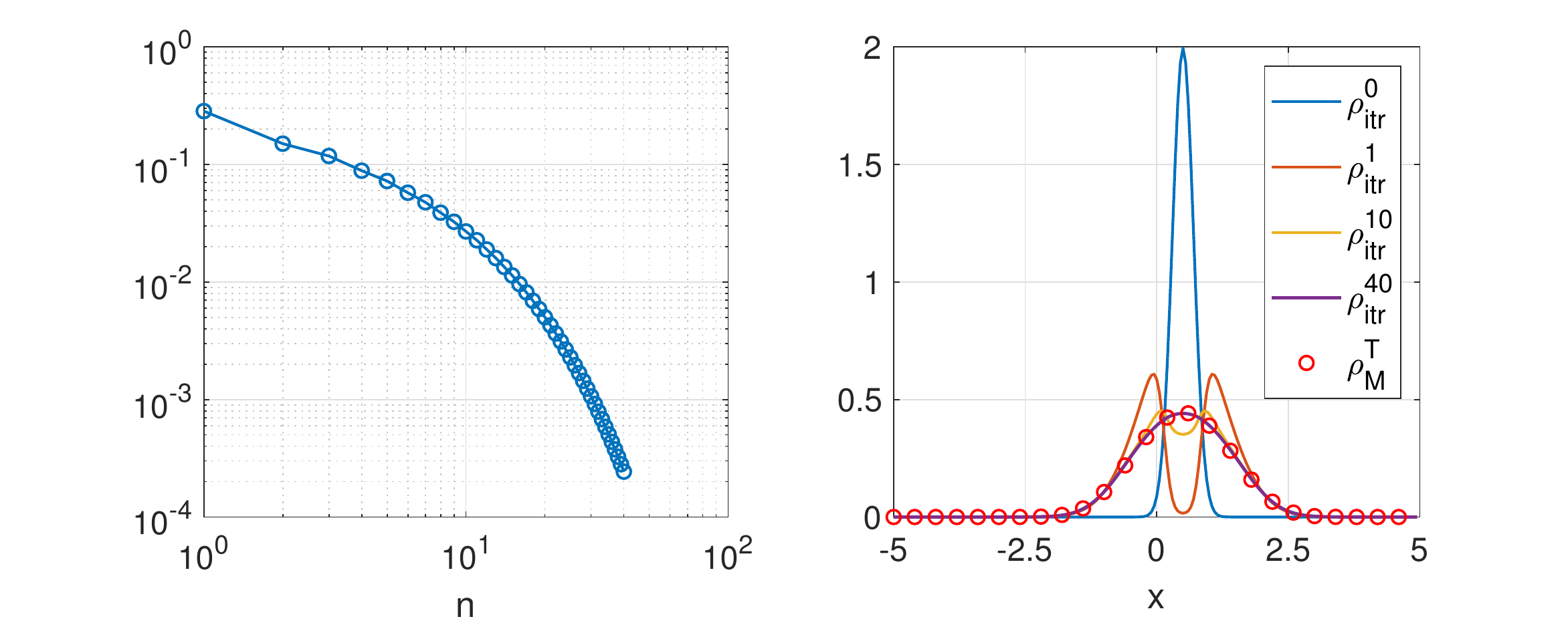}
\caption{Numerical results for the fixed-point iteration of functional $\mathcal{F}(\rho) = 0.5 \int_\Omega \rho(x)^2dx$. In the left figure, we plot the difference between two adjacent iterations. In the right figure, we plot the density profiles of some intermediate iteration processes and observe that the limit of the fixed point scheme converges to $\rho_M^T$.}
\label{fig:example_fixpoint_d}
\end{figure}

\bibliographystyle{abbrv}

\end{document}